\documentclass{amsproc}
\usepackage{amsmath}
\usepackage[a4paper,margin=3.5cm]{geometry}
\usepackage{amssymb,amsmath,amssymb,color,enumerate,amssymb,wasysym,mathrsfs,comment}
\usepackage{tikz}
\usetikzlibrary{positioning}
\input xy
\xyoption{all}

\newtheorem{theorem}{Theorem}[section]

\theoremstyle{definition}

\newtheorem{example}[theorem]{Example}
\theoremstyle{plain}

\newtheorem{corollary}[theorem]{Corollary}
\newtheorem{Prop}[theorem]{Proposition}
\newtheorem{Thm}[theorem]{Theorem}
\newtheorem{Setting}[theorem]{Setting}

\theoremstyle{remark}
\newtheorem{remark}[theorem]{Remark}

\numberwithin{equation}{section}
\newcommand{\Cal}[1]{{\mathcal #1}}

\newcommand{\Spec}{\operatorname{Spec}}

\newcommand{\sub}[1]{\mathrm{Sub} (#1)}

\newcommand{\cmat}{\left(\begin{array}}
\newcommand{\fmat}{\end{array}\right)}

\newcommand{\colim}{\varinjlim}
\newcommand{\Fin}{\operatorname{Fin}}
\newcommand{\Alg}{\operatorname{Alg}}
\newcommand{\Sh}{\operatorname{Sh}}
\newcommand{\Clop}{\operatorname{Clop}}

\usepackage[mathscr]{euscript}

\title{Spaces of subobjects as spectral spaces}
\author[F. Campanini]{Federico Campanini}
\address{Universit\'e catholique de Louvain, Institut de Recherche en Math\'ematique et Physique, 1348 Louvain-la-Neuve, Belgium}
\email{federico.campanini@uclouvain.be}

\author[C. A. Finocchiaro]{Carmelo Antonio Finocchiaro}
\address{Universit\`a degli Studi di Catania, Dipartimento di Matematica e Informatica,
Viale Andrea Doria 6, 95125 Catania, Italy}
\email{cafinocchiaro@unict.it}

\thanks{The first named author was a postdoctoral researcher of the Fonds de la Recherche Scientifique - FNRS when the project started. The second named author was supported by UCLouvain Bureau SST -- ``Professeurs et chercheurs visiteurs'' 2025, by GNSAGA, by the research project PIACERI ``ACIVA - Anelli commutativi, loro
ideali e varietà algebriche'' and by the research project PRIN 2022 ``Unirationality,
Hilbert schemes, and singularities''.}
\subjclass[2020]{Primary 18A32, 06B35, 54D30, Secondary 18A30, 13A99, 06B23}

\begin{document}

\begin{abstract}
We study natural topologies on spaces of subobjects of a fixed object in a suitable category, with the aim of determining when these spaces are spectral. A central step in our approach, which is also of independent interest, is the comparison between the categorical notion of a $\lambda$-generated object and the order-theoretic notion of a $\lambda$-compact element in a lattice of subobjects. We prove that these notions coincide under some natural and mild assumptions . In the finitary case, this allows us to describe the finitely generated subobjects purely in order-theoretic terms and to construct, inside each interval of a subobject lattice, a canonical algebraic core. We study this construction abstractly for complete lattices and characterize it by a universal property. We then introduce the categorical Zariski topology on spaces of subobjects, relate it to the Scott topology, and obtain spectrality criteria for the whole subobject space and for its algebraic core.
\end{abstract}

\maketitle

\section{Introduction}

Following Hochster \cite{hochster_spectral}, a topological space $X$ is said to be {\em spectral} if it is homeomorphic to the prime spectrum of a commutative unital ring endowed with the Zariski topology. In the main result of Hochster's thesis, spectral spaces were characterized in purely topological terms. More precisely, a topological space $X$ is spectral if and only if it satisfies the following conditions:
\begin{enumerate}
    \item
    $X$ is $T_0$ and quasi-compact;
    \item
    the collection of quasi-compact open subspaces of $X$ is a basis which is closed under finite intersections;
    \item
    $X$ is sober, meaning that any irreducible closed subset of $X$ is the closure of a unique point.
\end{enumerate}

In the last few decades, spectral spaces have turned out to be a powerful tool for addressing questions coming from various research areas in algebra, such as commutative ring theory and multiplicative ideal theory. For instance, a key tool for investigating ideal-theoretic features of certain classes of integral domains is the abstract Riemann surface of valuation domains of a given field $k$, which, when endowed with a topology introduced by Zariski, is a prominent example of a spectral space. For a deeper insight into this circle of ideas, see \cite{fifolo_transactions}.

Many examples of spectral spaces can be found in the context of commutative algebra. For instance, one may consider the space of subrings of a ring, the space of ideals (or of radical ideals) of a ring, and related spaces (see \cite{finocchiaro-ultrafiltri,top-vers-null,fi-fo-sp-dist,olberding_irredundant}).

Motivated by a recent paper by A. Banerjee \cite{Banerjee}, we are interested in understanding when certain families of subobjects of a given object in a suitable category are spectral spaces. We show that the setting proposed in \cite{Banerjee} can be significantly generalized so as to include, for instance, any variety of algebras or any Grothendieck topos. It is worth noting that the techniques we use are different from those in \cite{Banerjee}.

More precisely, consider a well-powered locally small category $\mathscr A$ admitting filtered colimits. Then, given an object $X \in \mathscr{A}$, the set $\sub X$ of all subobjects of $X$ can be endowed with a natural topological structure, which we call the {\em categorical Zariski topology}, generated by the sets of the form
$$
\mathcal B_F:=\{I\in \sub X\mid F\subseteq I\},
$$
where $F$ ranges over the finitely generated subobjects of $X$.

A crucial aspect of our investigation concerns the fact that, in categories satisfying a very natural and mild setting (see Setting~\ref{setting}), the categorical notion of finitely generated object can be seen in purely order-theoretical terms, namely in terms of compact elements; see Proposition~\ref{compact=fg}. More generally, for every regular cardinal $\lambda$, we compare $\lambda$-generated objects with $\lambda$-compact elements in lattices of subobjects. It is worth noting that our result links a global notion (that of $\lambda$-generated object) with a local one (that of $\lambda$-compact element in a poset of subobjects).

Starting from this comparison, we isolate an order-theoretic construction associated with a complete lattice $X$ and an element $a\in X$, which we call the algebraic core of $X$ over $a$. We prove that this core is an algebraic lattice and characterize it as a coreflective subposet of the interval $[a,1]$. In the categorical setting, this construction recovers and extends the family $\Fin(B,A)$ considered by Banerjee in the context of AB5 abelian categories.

We then study the categorical Zariski topology on spaces of subobjects. Under Setting~\ref{setting} for $\lambda=\aleph_0$, we show that the topology is $T_0$ precisely when the corresponding subobject lattice is algebraic; in this case it coincides with the Scott topology and is spectral. Even when the whole lattice of subobjects is not algebraic, the categorical algebraic core is always an algebraic lattice and its categorical Zariski topology is spectral. We conclude with examples and applications, including varieties of algebras, categories of sheaves, and an interpretation of a proper algebraic core in terms of the space of connected components of a compact space.

\section{Basics and terminology}
We recall some basic notions concerning categories and partially ordered sets. We shall mainly follow the terminology of \cite{adamek-rosicky} and \cite{Gierz2003}.

\subsection{Preliminaries on categories}

Let $\mathscr{A}$ be a category. 
\begin{enumerate}
    \item 
    $\mathscr{A}$ is {\em locally small} if $\hom(A,B)$ is a set for every pair of objects $A,B \in \mathscr{A}$.
    \item 
    $\mathscr{A}$ is {\em well-powered} if $\sub X$ is a set for every $X \in \mathscr{A}$.
    \item 
    $\mathscr{A}$ is complete (resp. cocomplete) if it admits all (small) limits (resp. colimits).
    \item
    Let $\lambda$ be a regular cardinal. A small category $I$ is said to be \emph{$\lambda$-filtered} if it satisfies the following conditions:
        \begin{enumerate}
        \item $I$ is nonempty;
        \item for every set of objects $\{i_j\}_{j \in J}$ of $I$ with $|J| < \lambda$, there exists an object $i \in I$ together with morphisms $i_j \to i$ for all $j \in J$;
        \item for every set of morphisms $\{u_j : i \to i'\}_{j \in J}$ in $I$ with $|J| < \lambda$, there exists a morphism $v : i' \to k$ such that $v \circ u_j = v \circ u_{j'}$ for all $j,j' \in J$.
    \end{enumerate}
    A \emph{$\lambda$-filtered colimit} in $\mathcal{A}$ is a colimit indexed by a $\lambda$-filtered category.
    If the category $I$ is a preordered set $(I,\leq)$, we shall often use the term $\lambda$-directed colimit.
    In particular, for $\lambda = \aleph_0$, one recovers the usual notion of filtered (or directed) category and filtered (or directed) colimit.
    \item 
    Assume that $\mathscr{A}$ is locally small and admits $\lambda$-filtered colimits of monomorphisms. An object $X \in \mathscr{A}$ is called {\em  $\lambda$-generated} provided that the hom-functor
        $$
        \hom(X, -) \colon \mathscr{A} \to \mathsf{Set}
        $$
    preserves $\lambda$-filtered colimits of monomorphisms \cite[Definition 1.67]{adamek-rosicky}. Notice that $\lambda$-filtered colimits can be replaced with $\lambda$-directed colimits, see~\cite[Theorem~1.5 and Remark~1.21]{adamek-rosicky}. In particular, $\lambda$-generated objects admit the following explicit description. An object $X \in \mathscr{A}$ is $\lambda$-generated if for every $\lambda$-directed diagram $\{u_{ij}\colon A_i\to A_j \mid i,j \in I, i\leq j\}$ of monomorphisms, with colimit cocone $\{\mu_i\colon A_i\to \colim A_i\}_{i\in I}$, the following conditions hold:
    \begin{enumerate}
    \item for every morphism $f\colon X\to \colim A_i$, there exist $i\in I$ and a morphism $f_i\colon X\to A_i$ such that $f=\mu_i f_i$;
    \item if $i,j\in I$ and $f_i\colon X\to A_i$ and $f_j\colon X\to A_j$ satisfy $\mu_i f_i=\mu_j f_j$, then there exists $k\in I$, with $i,j\leq k$, such that $u_{ik}f_i=u_{jk}f_j$.
    \end{enumerate}
    If $\lambda=\aleph_0$, we shall talk about {\em finitely generated objects}.
    \item 
    Let $\lambda$ be a regular cardinal. An object $X$ of a locally small category $\mathcal{A}$ is called {\em $\lambda$-presentable} if the hom-functor
    $$
    \mathrm{Hom}_{\mathcal{A}}(X,-) : \mathcal{A} \to \mathbf{Set}
    $$
    preserves $\lambda$-filtered colimits \cite[Definition~1.13]{adamek-rosicky}.
    A category $\mathcal{A}$ is said to be {\em locally $\lambda$-presentable} if it is cocomplete and there exists a set $\mathcal{P}$ of $\lambda$-presentable objects such that every object of $\mathcal{A}$ can be expressed as a $\lambda$-filtered colimit of objects from $\mathcal{P}$. Finally, $\mathcal{C}$ is called {\em locally presentable} if it is locally $\lambda$-presentable for some regular cardinal $\lambda$ \cite[Chapter~1]{adamek-rosicky}. Equivalently, locally presentable categories are precisely those categories that are accessible and cocomplete \cite[Corollary~2.47]{adamek-rosicky}.
    \item 
    If $\mathscr{A}$ is an abelian category, $\mathscr{A}$ satisfies the axiom AB5 if all colimits exist and filtered colimits are exact \cite[Chapter~2.8]{Popescu-abelian}.
    \end{enumerate}

\subsection{Preliminaries on partial orders}\label{order-definitions}
Let $(X,\leq)$ be a partially ordered set and let $\lambda$ be a regular cardinal.
\begin{enumerate}
    \item
    For every $a,b \in X$, we denote the interval between $a$ and $b$ by
    $$
    [a,b]:=\{x \in X \mid a\leq x\leq b\}.
    $$
    \item
    An element $c\in X$ is \emph{compact} if, for every directed subset $\Delta\subseteq X$ such that the supremum $\bigvee\Delta$ exists, the inequality $c\leq\bigvee\Delta$ implies that $c\leq d$ for some $d\in\Delta$. We denote by $K_X$ the set of compact elements of $X$.
    \item
    An element $c\in X$ is \emph{$\lambda$-compact} if, for every $\lambda$-directed subset $\Delta\subseteq X$ such that the supremum $\bigvee\Delta$ exists, the inequality $c\leq\bigvee\Delta$ implies that $c\leq d$ for some $d\in\Delta$. Here, $\Delta$ is $\lambda$-directed if every subset of $\Delta$ of cardinality smaller than $\lambda$ has an upper bound in $\Delta$.
    \item
    A complete lattice $X$ is \emph{algebraic} if every element $x\in X$ is the supremum of the compact elements below it, that is,
    $$
    x=\bigvee\{c\in K_X\mid c\leq x\}.
    $$
    \item 
    We say that $X$ is a {\em directed-complete partial order} (dcpo, for short), if every non-empty directed subset of $X$ has a supremum in $X$. Recall that the \emph{Scott topology} on a dcpo $X$ consists of the upper subsets $U$ such that, for every directed subset $\Delta\subseteq X$, the relation $\bigvee\Delta\in U$ implies $\Delta\cap U\neq\varnothing$.
    \item
    If $X$ is an algebraic lattice, the family of subsets 
    $$
    \Cal B_c:=\{x \in X \mid c\leq x\},
    $$
    where $c$ runs over the compact elements of $X$, is a basis for the Scott topology on $X$. We shall use the classical fact that the Scott space of an algebraic lattice is spectral; see, for instance, \cite[Chapter~II]{Gierz2003}.
\end{enumerate}

\section{$\lambda$-generated objects and $\lambda$-compact elements}

Let $\Cal A$ be a well-powered category admitting filtered colimits. We aim to compare the notions of finitely generated object and compact element in the posets of subobjects in a very natural and mild setting. More generally, we are actually in a position to compare the notions of $\lambda$-generated object and $\lambda$-compact element, where $\lambda$ is a regular cardinal.

It is worth noting that the comparison we are going to present links a global notion (that of $\lambda$-generated object) with a local one (that of $\lambda$-compact element).
The setting we want to consider is the following.

\begin{Setting}\label{setting} Fix a regular cardinal $\lambda$. Let $\mathscr{A}$ be a complete and well-powered category admitting $\lambda$-filtered colimits. Assume that $\mathcal{A}$ satisfies the following properties.
\begin{enumerate}
    \item
    $\lambda$-directed unions coincide with $\lambda$-filtered colimits, in the following sense: 
    if an object $X \in \mathcal{C}$ is the union of a $\lambda$-direct family of subobjects $\{u_i : A_i \hookrightarrow X\}_{i \in I}$ of $X$, then the diagram of all $A_i$'s and the factorizations of $u_i$ through $u_j$ for $i\leq j$ has a colimit canonically isomorphic to $X$.
    \item
    $\lambda$-filtered colimits commute with pullbacks.
    \item 
    For every $\lambda$-filtered colimit of monomorphisms, the colimit cocone consists of monomorphisms, that is, for every $\lambda$-filtered diagram of monomorphisms $\{u_{i,j}\colon A_i\to A_j \mid i,j \in I\}$, the structural morphisms
    \[
    A_i \longrightarrow \varinjlim_{i \in I} A_i
    \]
    are monomorphisms for all $i \in I$.
\end{enumerate}
\end{Setting}

\begin{remark}
\begin{enumerate}
    \item
    If $\Cal A$ is a locally $\lambda$-presentable category for some regular cardinal $\lambda$, then $\Cal A$ satisfies Setting~\ref{setting} for that $\lambda$ (see \cite[Chapter~1]{adamek-rosicky}). In particular, every variety of algebras and every Grothendieck topos satisfy Setting~\ref{setting} for a suitable regular cardinal $\lambda$.
    \item 
    Another class of examples is given by complete AB5 abelian categories (see \cite[Chapter~2, Theorem~8.6]{Popescu-abelian}).
    \item
    Recall that if an accessible category $\Cal A$ is complete, then it is locally presentable \cite[Corollary~2.47]{adamek-rosicky}. Therefore, since completeness is part of Setting~\ref{setting}, examples outside the locally presentable setting are necessarily non-accessible and typically require a change of universe or ``genuinely large" algebraic data.
    \item
    Every locally small cocomplete elementary topos satisfies Setting~\ref{setting} for every regular cardinal $\lambda$. This follows from the standard exactness and universality properties of colimits in elementary topoi~\cite[Chapter~5]{BorceuxHandbook3}. Consequently, cocomplete elementary topoi which are not Grothendieck topoi provide examples satisfying the setting which are not locally presentable.
\end{enumerate}
\end{remark}

\begin{Prop}\label{compact=fg} Fix a regular cardinal $\lambda$. Let $\mathscr A$ be a category satisfying Setting~\ref{setting} for that $\lambda$, and let $X$ be an object of $\mathscr A$. Then the following conditions are equivalent. 
    \begin{enumerate}
        \item $X$ is a $\lambda$-generated object of $\mathscr A$.
        \item For every object $Y$ of $\mathscr{A}$ such that $X$ is a subobject of $Y$, $X$ is $\lambda$-compact as an element of the partially ordered set $\sub Y$.
        \item 
        $X$ is $\lambda$-compact as an element of the partially ordered set $\sub X$.
        \item 
        There exists an object $Y$ of $\mathscr A$ such that $X$ is $\lambda$-compact as an element of the partially ordered set $\sub Y$.
    \end{enumerate}
\end{Prop}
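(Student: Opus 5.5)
We plan to prove the cycle (1)$\Rightarrow$(2)$\Rightarrow$(3)$\Rightarrow$(4)$\Rightarrow$(1). The steps (2)$\Rightarrow$(3) (take $Y=X$, with $X$ the top of $\sub X$) and (3)$\Rightarrow$(4) are immediate. The substance lies in (1)$\Rightarrow$(2) and (4)$\Rightarrow$(1). In both we use that a join of a $\lambda$-directed family in $\sub Y$ can be realized as a union. Since $\mathscr A$ is complete and well-powered, $\sub Y$ is a complete lattice, with meets given by intersections (wide pullbacks).

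\emph{(1)$\Rightarrow$(2).} Let $X\hookrightarrow Y$ and let $\{A_i\}_{i\in I}$ be a $\lambda$-directed family in $\sub Y$ with $X\le \bigvee A_i=:A$. Regard each $A_i$ as a subobject of $A$. Then $A$ is the union of the $\lambda$-directed family $\{A_i\}$ in $\sub A$, since a smaller subobject of $A$ containing all $A_i$ would give a smaller upper bound in $\sub Y$. By Setting~\ref{setting}(1), $A\cong\colim A_i$ with the inclusions as colimit cocone, and this is a $\lambda$-directed colimit of monomorphisms. The inclusion $X\to A$ therefore factors, by $\lambda$-generation, as $X\xrightarrow{f_i} A_i\hookrightarrow A$. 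Hence $X\le A_i$ in $\sub Y$.

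\emph{(4)$\Rightarrow$(1).} Suppose $X\hookrightarrow Y$ is $\lambda$-compact in $\sub Y$. We first show that $X$ is $\lambda$-compact in $\sub X$. Given a $\lambda$-directed family $B_i$ in $\sub X$ with union $X$, the images of the $B_i$ form a $\lambda$-directed family in $\sub Y$. Its join is $X$: any upper bound $Z$ gives $X\wedge Z\in\sub X$ containing all $B_i$, so $X\wedge Z=X$ and $X\le Z$. By compactness, $X\le B_i$ for some $i$.

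Now take a $\lambda$-directed diagram of monomorphisms $A_i$ with colimit $A$. By Setting~\ref{setting}(3) the $\mu_i$ are monos, so the $A_i$ form a $\lambda$-directed family in $\sub A$. For existence, let $f\colon X\to A$ and pull back each $\mu_i$ along $f$ to get $P_i\in\sub X$. By Setting~\ref{setting}(2), $\colim P_i\cong X\times_A\colim A_i=X$. So $X$ is the $\lambda$-directed colimit of the $P_i$; since each cocone map is a mono, $X$ is the union of the $P_i$ in $\sub X$. Some $P_i=X$ by compactness, and so $f$ factors through $A_i$. For essential uniqueness, suppose $\mu_if_i=\mu_jf_j$. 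Choose $k\ge i,j$ and set $g=u_{ik}f_i$, $h=u_{jk}f_j$. Since $\mu_k$ is a mono, $\mu_kg=\mu_kh$ forces $g=h$. So condition (b) holds even with $k$ chosen arbitrarily, and monicity of the cocone makes it automatic.

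\emph{Main obstacle.} The delicate point is the identification of joins in $\sub Y$ or $\sub X$ with $\lambda$-filtered colimits. In (1)$\Rightarrow$(2) this means checking that the lattice join in $\sub Y$ is the union in $\sub A$, so that Setting~\ref{setting}(1) applies. In (4)$\Rightarrow$(1) it means checking that a colimit of subobjects whose cocone consists of monos is their supremum in $\sub X$. For the latter we would argue as follows. If $Z\in\sub X$ contains all $P_i$, the maps $P_i\to Z$ form a cocone, inducing $X\to Z$ whose composite with $Z\hookrightarrow X$ is the identity. Hence $Z=X$.
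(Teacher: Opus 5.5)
Your proof is correct and follows essentially the same route as the paper. For (1)$\Rightarrow$(2), you use Setting~\ref{setting}(1) to realize the join as a $\lambda$-directed colimit and then factor the inclusion through some member. For (4)$\Rightarrow$(1), you pull back the monic colimit cocone along $f$, use that $\lambda$-filtered colimits commute with pullbacks, and conclude by $\lambda$-compactness. Your additional verifications make explicit points that the paper's proof leaves implicit:
\begin{itemize}
\item that the join in $\sub Y$ of subobjects of $X$ coincides with their join in $\sub X$;
\item that a colimit of subobjects with monic cocone is their supremum;
\item that the uniqueness clause follows from monicity of $\mu_k$.
\end{itemize}
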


\begin{proof}
    $(1)\Rightarrow(2)$. Let $Y$ be an object of $\mathscr A$ such that $X\subseteq Y$. Consider a $\lambda$-directed system $\mathcal G$ of subobjects of $Y$ such that $X\subseteq \bigvee \mathcal G$. By hypothesis, $\colim \mathcal G$ is canonically isomorphic to $\bigvee \mathcal G$ (Property $(1)$). Let $\mu_\Gamma:\Gamma\to \colim \mathcal G$ denote the structural morphism, for all $\Gamma\in\mathcal G$. Since $X$ is $\lambda$-generated, any embedding $\iota:X\to \bigvee \mathcal G$ factors through  $\mu_\Gamma$, for some $\Gamma\in\mathcal G$, say $\iota=\mu_\Gamma h$, for a unique morphism $h:X\to \Gamma$. Since $\iota$ is a monomorphism, it follows that $h$ is a monomorphism, that is,  $X$ is a subobject of $\Gamma$. This shows that $X$ is $\lambda$-compact in $\sub Y$. 

    The implications $(2)\Rightarrow(3) \Rightarrow (4)$ are clear.
    
    $(4)\Rightarrow(1)$. Let $\Cal D:= \{u_{i,j}\colon A_i\to A_j\mid i,j \in I\}$ be a $\lambda$-directed system of monomorphisms and let $Z:= \colim A_i$ be the colimit of the system. By hypothesis, all the structural morphisms $\mu_i \colon A_i \to Z$ of the colimit cocone are monomorphisms; therefore, $\Cal D$ defines a directed subset of the poset $\sub Z$, whose union is equal to $Z$, that is, $\bigvee_{i \in I} A_i=Z$.
    Let $\alpha \colon X \to Z$ be a morphism in $\mathscr{A}$ and consider the family of monomorphisms $\{\mu^*_i \colon \alpha^{-1}(A_i)\to X\mid i \in I\}$ obtained by pulling back the morphisms $\mu_i \colon A_i \to Z$, $i \in I$, along $\alpha$. Moreover, the family $\{u_{i,j}\colon A_i\to A_j\mid i,j \in I\}$ induces a directed system
    $$
    \Cal D^*:=\{u^*_{i,j}\colon \alpha^{-1}(A_i)\to \alpha^{-1}(A_j)\mid i,j \in I\}
    $$
    of monomorphisms between subobjects of $X$. Since $\lambda$-directed unions coincide with $\lambda$-directed colimits and $\lambda$-directed colimits commute with pullbacks, $\bigvee \alpha^{-1}(A_i)=\alpha^{-1}\big( \bigvee A_i \big)=X$. Since $X$ is a subobject of $Y$, all the objects $\alpha^{-1}(A_i)$ for $i \in I$ are also subobjects of $Y$ and, since $X$ is $\lambda$-compact in $\sub Y$, there exists an index $k \in I$ such that $X=\alpha^{-1}(A_k)$ (to be more precise, there exists an index $k \in I$ such that $\mu_k^\ast\colon \alpha^{-1}(A_k)\to X$ is an isomorphism). This gives a unique decomposition of $\alpha$ through $\mu_k$, as desired.
\end{proof}

\begin{remark}
    For $\lambda=\aleph_0$, the equivalence $(1) \Leftrightarrow (2)$ is already known in the context of Grothendieck categories (see, for instance, \cite[Chapter V, Section 3]{Sten}).
\end{remark}

\section{Algebraic cores and the categorical Zariski topology}

We first isolate an order-theoretic construction that will later be applied to subobject lattices.

\subsection{The algebraic core of a complete lattice}

Let $X$ be a complete lattice and let $a\in X$. We define the {\em algebraic core of $X$ over $a$} as:
$$
\Alg_a(X):=
\left\{
 a\vee\bigvee\Sigma
 \ \middle|\ 
 \Sigma\subseteq K_X
\right\}.
$$
The following result justifies the terminology.

\begin{Thm}\label{abstract-algebraic-core}
Let $X$ be a complete lattice and let $a\in X$. Then the following statements hold.
\begin{enumerate}
    \item $\Alg_a(X)$ is the smallest subset of the interval $[a,1]$ that contains every element $a\vee c$, with $c\in K_X$, and is closed under arbitrary non-empty suprema computed in $X$.
    \item The inclusion
    $$
    i:\Alg_a(X)\longrightarrow[a,1]
    $$
    admits a right adjoint
    $$
    \rho_a:[a,1]\longrightarrow\Alg_a(X),
    \qquad
    \rho_a(x):=a\vee\bigvee\{c\in K_X\mid c\leq x\}.
    $$
    Equivalently, for $y\in\Alg_a(X)$ and $x\in[a,1]$,
    $$
    i(y)\leq x
    \quad\Longleftrightarrow\quad
    y\leq\rho_a(x).
    $$
    Thus $\rho_a(x)$ is the largest element of $\Alg_a(X)$ below $x$.
    \item $\Alg_a(X)$ is an algebraic lattice. Arbitrary non-empty suprema are computed as in $X$, while
    $$
    \bigwedge_{\Alg_a(X)}x_j
    =
    \rho_a\left(\bigwedge_Xx_j\right).
    $$
    The compact elements of $\Alg_a(X)$ are precisely the elements of the form $a\vee c$, with $c\in K_X$.
\end{enumerate}
\end{Thm}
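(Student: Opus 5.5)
The plan is to work directly from the definition of $\Alg_a(X)$, using only two standard facts about a complete lattice $X$. First, the bottom element $0$ is compact, because directed sets are non-empty. Second, finite joins of compact elements are compact. Together these give $a=a\vee 0\in\Alg_a(X)$, and for $c\in K_X$ the element $a\vee c$ is the value of the defining expression at $\Sigma=\{c\}$. Every element of $\Alg_a(X)$ lies above $a$, so $\Alg_a(X)\subseteq[a,1]$.

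For (1), I first check that $\Alg_a(X)$ is closed under non-empty suprema. If $y_j=a\vee\bigvee\Sigma_j$ for $j$ in a non-empty set $J$, then $\bigvee_j y_j=a\vee\bigvee\bigcup_j\Sigma_j$; non-emptiness of $J$ is what keeps the term $a$. Conversely, let $S\subseteq[a,1]$ contain all $a\vee c$ with $c\in K_X$ and be closed under non-empty suprema. Then $S$ contains every $a\vee\bigvee\Sigma$. For $\Sigma\neq\varnothing$ this equals $\bigvee_{c\in\Sigma}(a\vee c)$. For $\Sigma=\varnothing$ it equals $a=a\vee 0$, which lies in $S$ because $0\in K_X$. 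Hence $\Alg_a(X)\subseteq S$.

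For (2), the element $\rho_a(x)$ lies in $\Alg_a(X)$ by definition, and $\rho_a(x)\le x$ because $a\le x$ and every $c$ in the defining set satisfies $c\le x$. Now suppose $y=a\vee\bigvee\Sigma\le x$. Then $\Sigma\subseteq\{c\in K_X\mid c\le x\}$, so $y\le\rho_a(x)$. This proves the adjunction $i\dashv\rho_a$ and shows that $\rho_a(x)$ is the largest element of $\Alg_a(X)$ below $x$.

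For (3), I argue in four steps.
\begin{itemize}
\item[(a)] $\Alg_a(X)$ has bottom $a$ and is closed under non-empty joins of $X$, so it is a complete lattice whose non-empty suprema are computed in $X$.
\item[(b)] The meet of a family $(x_j)$ in $\Alg_a(X)$ is the largest element of $\Alg_a(X)$ below every $x_j$, equivalently below $\bigwedge_X x_j$. Since $\bigwedge_X x_j\in[a,1]$, part (2) identifies this meet as $\rho_a\bigl(\bigwedge_X x_j\bigr)$.
\item[(c)] Each $a\vee c$ with $c\in K_X$ is compact in $\Alg_a(X)$. Directed suprema in $\Alg_a(X)$ are non-empty, so they are computed in $X$. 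If $a\vee c\le\bigvee\Delta$, compactness of $c$ in $X$ gives $c\le d$ for some $d\in\Delta$, and $a\le d$ forces $a\vee c\le d$.
\item[(d)] Conversely, any $y=a\vee\bigvee\Sigma$ is the directed supremum in $X$ of the elements $a\vee c_F$, where $F$ runs over finite subsets of $\Sigma$ and $c_F=\bigvee F$. Each $c_F$ is compact, and the empty $F$ gives $c_F=0$, so the family is non-empty. This directed supremum is therefore also a supremum in $\Alg_a(X)$, which shows that $\Alg_a(X)$ is algebraic. If $y$ is compact in $\Alg_a(X)$, then $y\le a\vee c_F$ for some $F$, and hence $y=a\vee c_F$.
\end{itemize}

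The only delicate point I expect is this last step: identifying the compact elements requires passing through finite joins of compacts and the closure of $K_X$ under finite joins. Apart from that, the bookkeeping about empty versus non-empty suprema needs care, which is exactly where the compactness of $0$ is used.
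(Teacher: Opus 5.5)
Your proof is correct and follows the paper's argument essentially step for step: closure under non-empty joins via unions of the $\Sigma_j$, the adjunction from $\rho_a(x)\le x$ together with $\Sigma\subseteq\{c\in K_X\mid c\le x\}$, infima via the right adjoint, and compact elements via the directed family $a\vee\bigvee F$ with $F\subseteq\Sigma$ finite. Your explicit use of $0\in K_X$ to handle $\Sigma=\varnothing$ in the minimality part of (1) is a small but genuine tightening, since the paper's identity $a\vee\bigvee\Sigma=\bigvee_{c\in\Sigma}(a\vee c)$ fails for empty $\Sigma$.
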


\begin{proof}
$(1)$ If $\{x_j\}_{j\in J}$ is a non-empty family in $\Alg_a(X)$ and $x_j=a\vee\bigvee\Sigma_j$, then
$$
\bigvee_{j\in J}x_j
=
 a\vee\bigvee\left(\bigcup_{j\in J}\Sigma_j\right),
$$
so $\Alg_a(X)$ is closed under arbitrary non-empty suprema in $X$. Of course, for the empty family we recover $a$ as the bottom element of $\Alg_a(X)$. Moreover,
$$
a\vee\bigvee\Sigma
=
\bigvee_{c\in\Sigma}(a\vee c).
$$
This proves the minimality statement in~(1).

$(2)$ Let $y\in\Alg_a(X)$ and write $y=a\vee\bigvee\Sigma$, with $\Sigma\subseteq K_X$. If $y\leq x$, then every $c\in\Sigma$ satisfies $c\leq x$, and hence
$$
y=a\vee\bigvee\Sigma
\leq
 a\vee\bigvee\{c\in K_X\mid c\leq x\}
=
\rho_a(x).
$$
Conversely, $\rho_a(x)\leq x$, since $a\leq x$ and every compact element occurring in its definition is below $x$. Therefore
$$
i(y)\leq x
\quad\Longleftrightarrow\quad
y\leq\rho_a(x),
$$
which proves~(2).

$(3)$ Since $\Alg_a(X)$ is closed under arbitrary suprema and has minimum $a$, it is a complete lattice. The formula for its infima follows from the adjunction. Let $c\in K_X$. We claim that $a\vee c$ is compact in $\Alg_a(X)$. Let $\Delta\subseteq\Alg_a(X)$ be directed and assume that
$$
a\vee c\leq\bigvee\Delta.
$$
Since suprema in $\Alg_a(X)$ are computed in $X$, we have $c\leq\bigvee\Delta$. The compactness of $c$ in $X$ yields $d\in\Delta$ such that $c\leq d$. Since $a\leq d$, it follows that $a\vee c\leq d$.

Every element $x=a\vee\bigvee\Sigma$ of $\Alg_a(X)$ satisfies
$$
x=\bigvee_{c\in\Sigma}(a\vee c),
$$
so $\Alg_a(X)$ is algebraic. Finally, if $x$ is compact in $\Alg_a(X)$, consider the directed family
$$
\left\{a\vee\bigvee F\ \middle|\ F\subseteq\Sigma\text{ finite}\right\},
$$
whose supremum is $x$. Compactness gives a finite subset $F\subseteq\Sigma$ such that
$$
x=a\vee\bigvee F.
$$
Since a finite join of compact elements of $X$ is compact, $x=a\vee c$ for some $c\in K_X$. This proves~(3).
\end{proof}

\begin{remark}
Notice that $\Alg_a(X)$ need not be a sublattice of $X$, since infima in $\Alg_a(X)$ need not coincide with those computed in $X$. Moreover, $X$ is an algebraic lattice if and only if $X=\Alg_0(X)$.
\end{remark}

\subsection{The categorical Zariski topology}
Let $\mathscr A$ be a locally small and well-powered category admitting filtered colimits of monomorphisms, and let $A\in\mathscr A$. We define the \emph{categorical Zariski topology} on $\sub A$ as the topology generated by the sets
$$
\Cal B_F:=\{T\in\sub A\mid F\subseteq T\},
$$
where $F$ ranges over the finitely generated subobjects of $A$.

The terminology comes from the classical prime spectrum. Let $R$ be a commutative ring and let $F=(f_1,\ldots,f_n)$ be a finitely generated ideal. On $\Spec(R)$, the condition $F\subseteq\mathfrak p$
defines the Zariski closed subset $V(F)=V(f_1,\ldots,f_n)$.
Equivalently, $V(F)$ is a quasi-compact open subset of the inverse topology of $\Spec(R)$. 
A further motivation for our construction comes from another Zariski-type topology that plays an important role in ring theory. Given a ring extension $R\subseteq S$, let $\Gamma(S/R)$ denote the space of intermediate rings. Its Zariski topology admits as a basis the sets of the type
$\{T\in\Gamma(S/R)\mid R[F]\subseteq T\}$, where $F$ ranges over the finite subsets of $S$~\cite[Proposition~3.5]{finocchiaro-ultrafiltri}.
These examples motivate our definition: the categorical topology is likewise generated by finite containment conditions. It also satisfies a number of elementary properties analogous to those arising in these classical settings, which we collect below.

\begin{Prop}\label{basic-properties-zariski}
Let $\mathscr A$ be a locally small and well-powered category admitting filtered colimits of monomorphisms, and let $A\in\mathscr A$. Endow $\sub A$ with the categorical Zariski topology. Then the following statements hold.
\begin{enumerate}
    \item Every open subset of $\sub A$ is an upper subset.
    \item For every finitely generated subobject $F\subseteq A$, the open subset $\Cal B_F$ is quasi-compact.
    \item For all $T,U\in\sub A$, one has
    $$
    T\in\overline{\{U\}}
    \quad\Longleftrightarrow\quad
    \text{every finitely generated subobject of $T$ is contained in $U$}.
    $$
    \item If finite joins of finitely generated subobjects of $A$ exist and are finitely generated, and the least subobject of $A$ is finitely generated, then the collection
    $$
    \{\Cal B_F\mid F\subseteq A\text{ is finitely generated}\}
    $$
    is a basis of quasi-compact open subsets closed under finite intersections.
\end{enumerate}
\end{Prop}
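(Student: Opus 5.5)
Each of the four items is elementary from the definition of the topology as the one generated by the subbasis $\{\Cal B_F\}$; I will treat them in order.

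For (1): every $\Cal B_F$ is an upper set, since $F\subseteq T\le T'$ gives $F\subseteq T'$. Finite intersections and arbitrary unions of upper sets are upper sets, so every open set is an upper set.

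For (2): the key observation is that $F\in\Cal B_F$, and by (1) every open set containing $F$ contains all of $\Cal B_F$. Given an open cover $\{U_j\}$ of $\Cal B_F$, choose $j$ with $F\in U_j$; then $\Cal B_F\subseteq U_j$, so a single member of the cover suffices. This does not even use finite generation of $F$ beyond $\Cal B_F$ being open.

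For (3): $T\in\overline{\{U\}}$ means that every open set containing $T$ contains $U$. It suffices to test this on basic open sets, which are the finite intersections $\Cal B_{F_1}\cap\dots\cap\Cal B_{F_n}$ (the empty intersection being $\sub A$). If every finitely generated $F\subseteq T$ satisfies $F\subseteq U$, then any basic open set containing $T$ has all its $F_i\subseteq T$, hence $F_i\subseteq U$, so it contains $U$. Conversely, if $T\in\overline{\{U\}}$ and $F\subseteq T$ is finitely generated, then $T\in\Cal B_F$, so $U\in\Cal B_F$, i.e.\ $F\subseteq U$. Here "subobject of $T$" is read as a finitely generated subobject of $A$ lying below $T$; any subobject of $T$ is a subobject of $A$ by composing monos, so the two readings agree.

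For (4): under the hypotheses, $\Cal B_{F}\cap\Cal B_{G}=\Cal B_{F\vee G}$, because $F,G\subseteq T$ if and only if $F\vee G\subseteq T$, and $F\vee G$ is finitely generated by assumption. The empty intersection is $\sub A=\Cal B_0$, where $0$ is the least subobject, again finitely generated by hypothesis. So the subbasis is closed under finite intersections and is therefore a basis, and by (2) it consists of quasi-compact opens.

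None of the steps is a real obstacle. The only care needed is in (4): the join $F\vee G$ must be taken in $\sub A$, the hypothesis guaranteeing that it exists and is finitely generated, so that the identity $\Cal B_F\cap\Cal B_G=\Cal B_{F\vee G}$ holds.
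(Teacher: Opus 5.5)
Your proposal is correct and follows the paper's proof essentially step for step. Both arguments use the same four ideas: upper sets for (1); the single-member subcover obtained from $F\in\mathcal{B}_F$ for (2); testing the closure on finite intersections of subbasic sets for (3); and the identities $\mathcal{B}_F\cap\mathcal{B}_G=\mathcal{B}_{F\vee G}$ and $\mathcal{B}_{0}=\sub A$, combined with (2), for (4).
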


\begin{proof}
$(1)$ For every finitely generated subobject $F\subseteq A$, the set $\Cal B_F$ is an upper subset of $\sub A$. Since arbitrary unions and finite intersections of upper subsets are upper subsets, the conclusion follows.

$(2)$ Let $F\subseteq A$ be finitely generated and suppose that
$$
\Cal B_F\subseteq\bigcup_{i\in I}U_i
$$
is an open covering. Since $F\in\Cal B_F$, there exists $i\in I$ such that $F\in U_i$. By~(1), $U_i$ is an upper subset. Hence every subobject of $A$ containing $F$ belongs to $U_i$, and therefore $\Cal B_F\subseteq U_i$. Thus $\Cal B_F$ is quasi-compact.

$(3)$ Let $T,U\in\sub A$. Assume first that $T\in\overline{\{U\}}$, and let $F\subseteq T$ be finitely generated. Then $T\in\Cal B_F$, so every open neighbourhood of $T$, and in particular $\Cal B_F$, meets $\{U\}$. Hence $U\in\Cal B_F$, that is, $F\subseteq U$.

Conversely, assume that every finitely generated subobject of $T$ is contained in $U$. Let $V$ be an open neighbourhood of $T$. Since the sets $\Cal B_F$ form a subbasis, there exist finitely generated subobjects $F_1,\ldots,F_n\subseteq A$ such that
$$
T\in\Cal B_{F_1}\cap\cdots\cap\Cal B_{F_n}\subseteq V.
$$
For every $j$, the inclusion $T\in\Cal B_{F_j}$ means that $F_j\subseteq T$, and hence $F_j\subseteq U$ by hypothesis. Therefore
$$
U\in\Cal B_{F_1}\cap\cdots\cap\Cal B_{F_n}\subseteq V.
$$
Thus every open neighbourhood of $T$ contains $U$, and so $T\in\overline{\{U\}}$.

$(4)$ Assume that finite joins of finitely generated subobjects exist and are finitely generated. Then, for finitely generated subobjects $F,G\subseteq A$,
$$
\Cal B_F\cap\Cal B_G=\Cal B_{F\vee G}.
$$
Moreover, if $0_A$ denotes the least subobject of $A$, then $0_A$ is finitely generated by assumption and $\Cal B_{0_A}=\sub A$. Hence the sets $\Cal B_F$ form a basis closed under finite intersections. Moreover, by~(2), every member of this basis is quasi-compact.
\end{proof}

\begin{remark}
    Notice that the categorical Zariski topology may fail to be $T_0$, and hence
need not be spectral. Consider for example the real unit interval $I=[0,1]$ with the Euclidean topology. When seen as a category, $I$ is locally small and well-powered and admits filtered colimits of monomorphisms. Clearly, we have $\sub 1\cong I$. Moreover, since $I$ is a poset, the finitely generated objects are precisely the compact elements of $I$ (because the morphisms coincide with the relations $\leq$ ). Since the only compact element of $I$ is $0$, the categorical Zariski topology on $I$ is the indiscrete topology.
\end{remark}

\begin{example}\label{example-t0-specialization}
Let $(P,\leq)$ be the poset whose underlying set is
$$
P=\{0,a,b,x,d_0,d_1,\ldots,1\},
$$
where $0$ and $1$ are the bottom and top element, respectively, and whose order is generated by
$$
a,b<x \qquad \text{and} \qquad  
a,b<d_0<d_1<\cdots,
$$
and $x$ is incomparable with $d_n$ for every $n \in \mathbb N$. Its Hasse diagram is the following.

\begin{figure}[ht]
\centering
\begin{tikzpicture}[scale=1.1, every node/.style={inner sep=2pt}]
    \node (zero) at (0,0) {$0$};

    \node (a) at (-1.5,1.2) {$a$};
    \node (b) at ( 1.5,1.2) {$b$};

    \node (x)  at (-1.5,2.8) {$x$};

    \node (d0) at (1.5,2.8) {$d_0$};
    \node (d1) at (1.5,3.7) {$d_1$};
    \node (d2) at (1.5,4.6) {$d_2$};
    \node (vd1) at (1.5,5.35) {$\vdots$};
    \node (dn) at (1.5,6.15) {$d_n$};
    \node (vd2) at (1.5,6.95) {$\vdots$};

    \node (one) at (0,7.9) {$1$};

    \draw (zero) -- (a);
    \draw (zero) -- (b);

    \draw (a) -- (x);
    \draw (b) -- (x);

    \draw (a) -- (d0);
    \draw (b) -- (d0);

    \draw (d0) -- (d1);
    \draw (d1) -- (d2);
    \draw (d2) -- (vd1);
    \draw (vd1) -- (dn);
    \draw (dn) -- (vd2);

    \draw (x) -- (one);
    \draw (vd2) -- (one);
\end{tikzpicture}
\end{figure}

Regarded as a category, $P$ is locally small and well-powered and admits filtered colimits of monomorphisms. Taking $A=1$, we have $\sub A\cong P$. Moreover, since $P$ is a poset, the finitely generated objects are precisely the compact elements of $P$. In particular, from the fact that $x\leq 1=\bigvee_{i \in \mathbb N}d_i$ and $x\nleq d_n$ for every $n \in \mathbb N$, it follows that the element $x$ is not finitely generated. Clearly, the same argument applies to the top element 1. On the other hand, it is easy to check that every other element of $P$ is finitely generated. Therefore the set of finitely generated subobjects of $1$  (i.e., the set of compact elements of $P$) is
$$
K_P=\{0,\ a,\ b,\ d_0,d_1,\ldots\}.
$$
We want to prove that the categorical Zariski topology on $\sub A$ is spectral. First of all, a direct verification based on the description of $K_P$ shows that the space is $T_0$. Now, set
$$
U_a=\Cal B_a,\qquad
U_b=\Cal B_b,\qquad
U_x=U_a\cap U_b,\qquad
U_n=\Cal B_{d_n}.
$$
Then
$$
U_x=\{x,d_0,d_1,\ldots,1\},
\qquad
U_n=\{d_m\mid m\geq n\}\cup\{1\}.
$$
The open subsets of $P$ are precisely
$$
\varnothing,\quad P,\quad
U_x,\quad U_a,\quad U_b,\quad U_a\cup U_b,\quad
\text{and} \quad
U_n \text{ for every } n \in \mathbb N .
$$
Hence $\{P,U_a,U_b,U_x,U_0, U_1\dots \}$ is a basis closed under finite intersections. Indeed,
$$
U_a\cap U_b=U_x,\qquad
U_x\cap U_n=U_n,\qquad
U_n\cap U_m=U_{\max\{n,m\}},
$$
and the remaining intersections are immediate. Moreover, each member of this basis is quasi-compact by Proposition \ref{basic-properties-zariski} (2) and a direct verification on $U_x$.

Finally, the non-empty closed subsets are
$$
\{0\},\qquad
\{0,a\},\qquad
\{0,b\},\qquad
\{0,a,b\},\qquad
\{0,a,b,x\},
$$
$$
\{0,a,b,x,d_0,\ldots,d_n\}\quad \text{ for every } n\in\mathbb N,
\qquad
P.
$$
Among these,
$$
\{0,a,b\}=\{0,a\}\cup\{0,b\}
$$
is reducible, while every other nonempty closed subset is the closure of a point. More precisely,
$$
\overline{\{0\}}=\{0\},\qquad
\overline{\{a\}}=\{0,a\},\qquad
\overline{\{b\}}=\{0,b\}, \qquad
\overline{\{x\}}=\{0,a,b,x\},
$$
$$
\overline{\{d_n\}}=\{0,a,b,x,d_0,\ldots,d_n\}, \qquad \text{and}\qquad 
\overline{\{1\}}=P.
$$
Thus every nonempty irreducible closed subset has a generic point, which is unique since the space is $T_0$. Therefore the space is sober. Since $P$ is quasi-compact and admits a basis of quasi-compact open subsets closed under finite intersections, it is spectral.

Notice that, in this example, finitely generated objects correspond to compact elements even though $P$, when regarded as a category, does not satisfy Setting~\ref{setting} for $\lambda=\aleph_0$ (indeed, $P$ is not a complete lattice).
\end{example}

From now on, we assume that $\mathscr A$ satisfies Setting~\ref{setting} for $\lambda=\aleph_0$. By Proposition~\ref{compact=fg}, specialized to $\lambda=\aleph_0$, the finitely generated subobjects of $A$ are precisely the compact elements of $\sub A$. Since finite joins of compact elements are compact, the sets $\Cal B_F$, with $F$ finitely generated, form a basis of the categorical Zariski topology. Moreover, if $\sub A$ is algebraic, then the categorical Zariski topology on $\sub A$ coincides with its Scott topology. More generally, the subspace topology induced on the algebraic core coincides with the Scott topology of the algebraic core. The precise statements are collected below.

\begin{Prop}\label{whole-subobject-space}
Let $A\in\mathscr A$. The following conditions are equivalent.
\begin{enumerate}
    \item The categorical Zariski topology on $\sub A$ is $T_0$.
    \item The lattice $\sub A$ is algebraic.
    \item Every subobject $T\subseteq A$ is the supremum of the finitely generated subobjects contained in $T$.
    \item The categorical Zariski topology on $\sub A$ coincides with the Scott topology.
    \item The space $\sub A$, endowed with the categorical Zariski topology, is spectral.
\end{enumerate}
\end{Prop}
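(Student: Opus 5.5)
We work throughout in Setting~\ref{setting} with $\lambda=\aleph_0$. By Proposition~\ref{compact=fg}, the finitely generated subobjects of $A$ are exactly the elements of $K_{\sub A}$. Since $\mathscr A$ is complete and well-powered, $\sub A$ is a complete lattice. The equivalence $(2)\Leftrightarrow(3)$ is then immediate: (3) is the definition of algebraicity with ``compact'' replaced by ``finitely generated''.

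For $(1)\Leftrightarrow(3)$ we use Proposition~\ref{basic-properties-zariski}(3). The specialization preorder is given by $T\in\overline{\{U\}}$ if and only if $K(T)\subseteq K(U)$, where $K(T)=\{c\in K_{\sub A}\mid c\le T\}$. Hence the topology is $T_0$ exactly when the map $T\mapsto K(T)$ is injective.
\begin{itemize}
\item If (3) holds, then $T=\bigvee K(T)$, so $T$ is recovered from $K(T)$ and the map is injective.
\item Conversely, assume $T_0$ and fix $T$. Set $T'=\bigvee K(T)\le T$. Any compact $c\le T'$ satisfies $c\le T$, so $K(T')=K(T)$. By injectivity $T'=T$, which is (3).
\end{itemize}

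For $(2)\Rightarrow(4)$, recall from Section~\ref{order-definitions} that the sets $\Cal B_c$, $c$ compact, form a basis of the Scott topology of an algebraic lattice. We noted just before the statement that the $\Cal B_F$ with $F$ finitely generated form a basis of the categorical Zariski topology. By Proposition~\ref{compact=fg} these two families of basic opens coincide, so the topologies coincide.

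For $(4)\Rightarrow(1)$, the Scott topology on any poset is $T_0$. Indeed, if $x\not\le y$, then the complement of the down-set ${\downarrow} y$ is Scott open and separates $x$ from $y$. Here one must check that $\sub A\setminus{\downarrow}y$ is Scott open; this holds because ${\downarrow}y$ is closed under directed suprema.

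For $(2)\Rightarrow(5)$, the classical fact cited in Section~\ref{order-definitions} says the Scott space of an algebraic lattice is spectral, and by $(2)\Rightarrow(4)$ that Scott space is our space. For $(5)\Rightarrow(1)$, spectral spaces are $T_0$ by Hochster's characterization.

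The only step with real content is $(1)\Rightarrow(3)$, and it reduces entirely to the specialization description in Proposition~\ref{basic-properties-zariski}(3) together with the identification of finitely generated subobjects with compact elements. I expect no genuine obstacle beyond keeping track of which results use Setting~\ref{setting}. That hypothesis is needed in two places: to identify finitely generated subobjects with compact elements, and to know that the $\Cal B_F$ form a basis.
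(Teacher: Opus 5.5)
Your proof is correct and follows essentially the same route as the paper. The key step $(1)\Rightarrow(3)$ is the same observation that $T$ and $\bigvee K(T)=T^f$ cannot be topologically distinguished: you phrase it via the specialization order of Proposition~\ref{basic-properties-zariski}(3), the paper via membership in the basic opens $\Cal B_F$, and the remaining implications rest on the same facts (Proposition~\ref{compact=fg} and the spectrality of Scott spaces of algebraic lattices). Your explicit check that every Scott topology is $T_0$ also supplies the implication $(4)\Rightarrow(1)$, which the paper's ``equivalence of~(2) and~(4)'' leaves implicit.
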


\begin{proof}
For $T\in\sub A$, set
$$
T^f:=\bigvee\{F\subseteq T\mid F\text{ is finitely generated}\}.
$$
For every finitely generated subobject $F$ of $A$,
$$
F\subseteq T
\quad\Longleftrightarrow\quad
F\subseteq T^f.
$$
Hence $T$ and $T^f$ belong to exactly the same basic open subsets. If the topology is $T_0$, then $T=T^f$, proving~(3). By Proposition~\ref{compact=fg}, finitely generated subobjects are exactly the compact elements of $\sub A$, so~(3) is equivalent to~(2).

If $\sub A$ is algebraic and $T_1\neq T_2$, then, up to exchanging them, $T_1\nsubseteq T_2$. Since $T_1$ is the supremum of its compact subobjects, there exists a finitely generated subobject $F\subseteq T_1$ such that $F\nsubseteq T_2$. Thus $\Cal B_F$ separates $T_1$ and $T_2$, proving~(1).

For an algebraic lattice, the principal upper sets generated by compact elements form a basis of the Scott topology. Proposition~\ref{compact=fg} therefore gives the equivalence of~(2) and~(4). The Scott space of an algebraic lattice is spectral \cite[Chapter~II]{Gierz2003}, so~(4) implies~(5), while~(5) implies~(1).
\end{proof}

The following example shows that the situation may change if, when defining the subbasis, we replace finitely generated objects with $\lambda$-generated objects (for a regular cardinal $\lambda > \aleph_0$).

\begin{example}\label{powerset-example}
Let $\mathscr A=\mathsf{Set}$ and let $A=\mathbb N$. Then $\sub A$ is canonically isomorphic to $\Cal P(\mathbb N)$, ordered by inclusion. Its compact elements, equivalently its finitely generated subobjects, are precisely the finite subsets. Hence the categorical Zariski topology is generated by the family
$$
\Cal B_F:=\{C\subseteq\mathbb N\mid F\subseteq C\},
$$
where $F$ is finite. Since $\Cal P(\mathbb N)$ is algebraic, this topology is spectral by Proposition~\ref{whole-subobject-space}.

Now let $\lambda>\aleph_0$ be a regular cardinal. Every subset of $\mathbb N$ is $\lambda$-compact. Indeed, if $C\subseteq\bigcup\Delta$ for a $\lambda$-directed family $\Delta$, choose $D_n\in\Delta$ with $n\in D_n$ for every $n\in C$. Since $|C|\leq\aleph_0<\lambda$, there exists $D\in\Delta$ containing all the $D_n$, and hence $C\subseteq D$.

Consequently, the topology generated by the principal upper sets associated with the $\lambda$-compact elements is the topology of all upper subsets of $\Cal P(\mathbb N)$. 

We want to show that $X$ is not sober. Consider the closed subset $\mathcal{F}=\{A\subseteq\mathbb{N}\mid A\text{ is finite}\}\subseteq X$ and notice that $\mathcal{F}$ is irreducible. Indeed, let $U$ and $V$ be open upper subsets of $X$ such that $U\cap\mathcal{F}\neq\varnothing$ and $V\cap\mathcal{F}\neq\varnothing$. Choose $A\in U\cap\mathcal{F}$ and $B\in V\cap\mathcal{F}$. The set $A\cup B$ is finite and $A\cup B\in U\cap V\cap\mathcal{F}$. Thus any two open subsets meeting $\mathcal{F}$ have an intersection that also meets $\mathcal{F}$.

On the other hand, $\mathcal{F}$ is not the closure of any point. For every $A\subseteq\mathbb{N}$, the closure of $\{A\}$ is $\overline{\{A\}}=\{C\subseteq\mathbb{N}\mid C\subseteq A\}$. If $A$ is finite, this closure does not contain all finite subsets of $\mathbb{N}$. If $A$ is infinite, the closure contains the infinite set $A$ itself. Hence $\overline{\{A\}}\neq\mathcal{F}$ for every $A\subseteq\mathbb{N}$.

It follows that $X$ is not sober and therefore is not a spectral space. Thus the topology generated by the compact elements is spectral, whereas the topology generated by the $\lambda$-compact elements is $T_0$ but not spectral.
\end{example}

\subsection{The categorical algebraic core}

Let $B\subseteq A$ be subobjects. Following Banerjee \cite{Banerjee}, set
$$
\Fin(B,A):=
\{T\in\sub A\mid B\subseteq T\text{ and }T=B\vee T^f\},
$$
where
$$
T^f:=\bigvee\{F\subseteq T\mid F\text{ is a finitely generated subobject}\}.
$$

\begin{Prop}\label{categorical-algebraic-core}
Let $A\in\mathscr A$ and let $B\subseteq A$. Then
$$
\Fin(B,A)=\Alg_B(\sub A).
$$
Equivalently,
$$
\Fin(B,A)
=
\left\{
B\vee\bigvee\Cal F
\ \middle|\ 
\Cal F\text{ is a family of finitely generated subobjects of }A
\right\}.
$$
In particular, $\Fin(B,A)$ is an algebraic lattice, its compact elements are precisely the subobjects $B\vee F$, with $F$ finitely generated, the categorical Zariski subspace topology on $\Fin(B,A)$ coincides with its Scott topology and $\Fin(B,A)$ is spectral.
\end{Prop}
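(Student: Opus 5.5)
The plan is to reduce everything to Theorem~\ref{abstract-algebraic-core}, applied to the complete lattice $X=\sub A$ with $a=B$. By Proposition~\ref{compact=fg} with $\lambda=\aleph_0$, the finitely generated subobjects of $A$ are exactly the compact elements $K_{\sub A}$. Completeness of $\sub A$ comes from Setting~\ref{setting}: $\mathscr A$ is complete and well-powered, so arbitrary meets of subobjects exist as wide pullbacks, and hence arbitrary joins exist as well. Consequently, for every $T$ we have $T^f=\bigvee\{c\in K_{\sub A}\mid c\leq T\}$, and whenever $B\subseteq T$ this gives $B\vee T^f=\rho_B(T)$ in the notation of Theorem~\ref{abstract-algebraic-core}(2).

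For the set equality, I first take $T\in\Fin(B,A)$. Then $T=B\vee T^f=B\vee\bigvee\Sigma$, where $\Sigma$ is the set of compact elements below $T$, so $T\in\Alg_B(\sub A)$. Conversely, let $T=B\vee\bigvee\Sigma$ with $\Sigma\subseteq K_{\sub A}$. Clearly $B\subseteq T$. Every $c\in\Sigma$ lies below $T$, hence below $T^f$, so $T\leq B\vee T^f$. On the other hand $B\vee T^f\leq T$ trivially. Thus $T\in\Fin(B,A)$. Equivalently, $\Fin(B,A)$ is the set of fixed points of $\rho_B$, which is exactly $\Alg_B$ by Theorem~\ref{abstract-algebraic-core}(2). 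The second description in the statement is the same equality after replacing ``compact'' by ``finitely generated''.

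Theorem~\ref{abstract-algebraic-core}(3) then shows that $\Fin(B,A)$ is an algebraic lattice whose compact elements are the $B\vee F$ with $F$ finitely generated. For the topology, I compare bases. On $\Fin(B,A)$ the subspace topology has subbasic sets $\Cal B_F\cap\Fin(B,A)=\{T\in\Fin(B,A)\mid F\subseteq T\}$. Since every $T$ in the core contains $B$, we have $F\subseteq T$ if and only if $B\vee F\subseteq T$. So these traces are exactly the principal upper sets $\uparrow(B\vee F)$ taken inside $\Fin(B,A)$, and these range over all compact elements of the algebraic lattice $\Fin(B,A)$. These principal upper sets form a basis of the Scott topology on an algebraic lattice (Section~\ref{order-definitions}). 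Hence the two topologies coincide, and $\Fin(B,A)$ is spectral by the cited classical fact \cite[Chapter~II]{Gierz2003}.

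The main obstacle is this topological comparison. I need the Scott topology of $\Fin(B,A)$, which is computed with directed suprema inside the core, to coincide with the subspace topology induced from $\sub A$. This works because nonempty suprema in $\Fin(B,A)$ agree with those in $\sub A$, and because the traces of the $\Cal B_F$ are closed under finite intersections: $\Cal B_F\cap\Cal B_G=\Cal B_{F\vee G}$, and $F\vee G$ is again compact. A further point to check is that the least subobject is compact; this holds because it lies below every element of any nonempty directed family.
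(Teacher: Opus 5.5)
Your proposal is correct and takes the same approach as the paper, whose proof is just the one-line reduction to Theorem~\ref{abstract-algebraic-core} and Proposition~\ref{compact=fg}. You fill in the details the paper leaves implicit: completeness of $\sub A$, the identity $T^f=\bigvee\{c\in K_{\sub A}\mid c\leq T\}$ (which makes $\Fin(B,A)$ the fixed-point set of $\rho_B$), and the identification of the traces $\Cal B_F\cap\Fin(B,A)$ with the principal upper sets of the compact elements $B\vee F$ of the core.
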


\begin{proof}
It follows from Theorem~\ref{abstract-algebraic-core} and Proposition~\ref{compact=fg}.
\end{proof}

\begin{remark}
    This construction extends Banerjee's construction, as well as \cite[Proposition~2.1]{Banerjee}, from the setting of AB5 abelian categories to the present more general categorical setting.
\end{remark}

\begin{corollary}\label{interval-algebraic}
If $\sub A$ is algebraic, then
$$
\Fin(B,A)=[B,A]
$$
for every $B\subseteq A$. In particular, every interval $[B,A]$, endowed with the categorical Zariski topology, is spectral.
\end{corollary}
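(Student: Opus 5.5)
The plan is to combine Proposition~\ref{categorical-algebraic-core} with the description of $\Alg_B(\sub A)$ in Theorem~\ref{abstract-algebraic-core}, using algebraicity of $\sub A$. By Proposition~\ref{categorical-algebraic-core}, $\Fin(B,A)=\Alg_B(\sub A)$, and this is contained in the interval $[B,A]$ by definition. So only the reverse inclusion $[B,A]\subseteq\Fin(B,A)$ needs an argument.

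Let $T\in[B,A]$. Since $\sub A$ is algebraic, Proposition~\ref{whole-subobject-space} (condition~(3)) gives $T=T^f$. Here $T^f$ is the join of the finitely generated subobjects contained in $T$, and these are exactly the compact elements of $\sub A$ below $T$ by Proposition~\ref{compact=fg}. Since $B\subseteq T$, we get
$$
B\vee T^f = B\vee T = T .
$$
This says exactly that $T\in\Fin(B,A)$. Equivalently, in the language of Theorem~\ref{abstract-algebraic-core}(2), the coreflection satisfies $\rho_B(T)=B\vee T^f=T$, so every element of $[B,A]$ is fixed by $\rho_B$ and hence lies in $\Alg_B(\sub A)$.

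For the final assertion, once $\Fin(B,A)=[B,A]$ as sets, the interval $[B,A]$ with the subspace topology induced by the categorical Zariski topology of $\sub A$ is the space $\Fin(B,A)$ with its subspace topology. Proposition~\ref{categorical-algebraic-core} shows this space is spectral, since it is the Scott space of an algebraic lattice. There is essentially no obstacle here. The only point to check is that "the categorical Zariski topology" on $[B,A]$ means the subspace topology from $\sub A$, which is how Proposition~\ref{categorical-algebraic-core} treats $\Fin(B,A)$. With that reading, the identification of the sets transfers spectrality directly.
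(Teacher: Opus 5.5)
Your proposal is correct and matches the paper's proof: for $T\in[B,A]$, algebraicity of $\sub A$ together with Proposition~\ref{compact=fg} gives $T=T^f$, hence $T=B\vee T^f\in\Fin(B,A)$, and spectrality of $[B,A]$ then follows from Proposition~\ref{categorical-algebraic-core}. Reading the topology on $[B,A]$ as the subspace topology from $\sub A$ is the intended interpretation.
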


\begin{proof}
If $T\in[B,A]$, algebraicity and Proposition~\ref{compact=fg} give $T=T^f$. Hence $T=B\vee T^f$ and $T\in\Fin(B,A)$.
\end{proof}

\section{Examples and applications}

\subsection{Classical algebraic examples}

If $\mathscr A$ is locally finitely presentable, the subobjects of every object form an algebraic lattice; see \cite[Theorem~5]{Porst2011}. Proposition~\ref{whole-subobject-space} and Corollary~\ref{interval-algebraic} therefore apply to a large class of familiar examples.

\begin{corollary}\label{variety-example}
Let $\Cal V$ be a finitary variety of algebras, let $A\in\Cal V$, and let $B$ be a subalgebra of $A$. Then the set $[B,A]$ of all intermediate subalgebras, endowed with the categorical Zariski topology, is a spectral space.
\end{corollary}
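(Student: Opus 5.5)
The plan is to reduce the statement to Corollary~\ref{interval-algebraic}. That requires two things: that the variety $\Cal V$, viewed as a category, satisfies Setting~\ref{setting} for $\lambda=\aleph_0$, and that $\sub A$ is an algebraic lattice.

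First, a finitary variety of algebras is locally finitely presentable. It is cocomplete, and every algebra is a filtered colimit of finitely presented algebras, namely quotients of free algebras on finitely many generators by finitely many relations. By the Remark following Setting~\ref{setting}, $\Cal V$ therefore satisfies the Setting for $\lambda=\aleph_0$, and all results of the previous section apply. Under this identification, subobjects of $A$ in $\Cal V$ are exactly the subalgebras of $A$. Monomorphisms in a variety are the injective homomorphisms, since the forgetful functor is represented by the free algebra on one generator. The interval $[B,A]$ in $\sub A$ is therefore precisely the set of intermediate subalgebras.

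Second, I would show that $\sub A$ is algebraic. This follows from the cited result of Porst for locally finitely presentable categories, but it can also be seen directly. Every subalgebra $T$ is the directed union of the subalgebras generated by finite subsets of $T$. A subalgebra $\langle S\rangle$ generated by a finite set $S$ is compact in $\sub A$: directed suprema of subalgebras are unions because the operations are finitary, so if $\langle S\rangle\subseteq\bigcup\Delta$, then the finitely many elements of $S$ lie in a single member of $\Delta$. Hence every element of $\sub A$ is a supremum of compact elements.

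Corollary~\ref{interval-algebraic} then gives $\Fin(B,A)=[B,A]$. By Proposition~\ref{categorical-algebraic-core}, this set with the categorical Zariski topology is spectral. The only mildly delicate step is identifying the categorical data (monomorphisms, subobjects, directed unions) with the algebraic ones. Once local finite presentability is granted, that identification is standard.
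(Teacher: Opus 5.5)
Your proposal is correct and follows the paper's proof. The paper likewise observes that a finitary variety satisfies Setting~\ref{setting} for $\lambda=\aleph_0$ and that $\sub A$ is algebraic, and then applies Corollary~\ref{interval-algebraic}; your direct check of algebraicity (directed unions of subalgebras are subalgebras, so finitely generated subalgebras are compact) and your identification of monomorphisms with injective homomorphisms just fill in details that the paper delegates to Porst's theorem.
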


\begin{proof}
Every finitary variety of algebras satisfies Setting~\ref{setting} for $\lambda=\aleph_0$, and $\sub A$ is an algebraic lattice. Apply Corollary~\ref{interval-algebraic}.
\end{proof}

The same argument applies to submodules, subgroups, subrings, subpresheaves, simplicial subsets, subrepresentations of a quiver, and subcomplexes of a fixed chain complex. In each case, the basic open subsets prescribe the containment of finitely many generators.

\begin{example}\label{sheaves-algebraic}
Let $X$ be a topological space admitting a basis of quasi-compact open subsets, and let $\mathscr A=\Sh(X)$. If $1$ denotes the terminal object, there is a canonical isomorphism of complete lattices
$$
\operatorname{Sub}_{\mathscr A}(1)\cong\Cal O(X).
$$
Under this identification, finitely generated subobjects correspond to quasi-compact open subsets. Since these form a basis of $X$, the frame $\Cal O(X)$ is algebraic. Therefore, for every open subset $B\subseteq X$, the interval
$$
[B,X]:=\{U\in\Cal O(X)\mid B\subseteq U\}
$$
is spectral. A basis of quasi-compact open subsets of this space is given by
$$
\Cal B_K:=\{U\in[B,X]\mid K\subseteq U\},
$$
where $K$ ranges over the quasi-compact open subsets of $X$.

In particular, if $X=\Spec(R)$, one may take $K=D(f_1)\cup\cdots\cup D(f_n)$. Thus the sets
$$
\Cal B_{f_1,\ldots,f_n}
:=
\{U\in[B,\Spec(R)]\mid D(f_1)\cup\cdots\cup D(f_n)\subseteq U\}
$$
form a basis of quasi-compact open subsets.
\end{example}

\subsection{A proper algebraic core and the space of connected components}

Let $X$ be a compact (= quasi-compact and Hausdorff) space. Denote by $\pi_0(X)$ the set of connected components of $X$, endowed with the quotient topology, and let
$$
q:X\longrightarrow\pi_0(X)
$$
be the quotient map. Thus, the points of $\pi_0(X)$ are the connected components of $X$, and a subset $U\subseteq\pi_0(X)$ is open if and only if $q^{-1}(U)$ is open in $X$.

\begin{Prop}\label{components-example}
Let $X$ be a compact space, let $\mathscr A=\Sh(X)$, and let $1$ be the terminal object. Then the quotient map $q\colon X \to \pi_0(X)$ induces an isomorphism of complete lattices
$$
Q\colon\Fin(0,1)\xrightarrow{\sim}\Cal O(\pi_0(X)), \qquad A \mapsto q(A).
$$
It is a homeomorphism when $\Fin(0,1)$ is endowed with the categorical Zariski topology and $\Cal O(\pi_0(X))$ is endowed with the Scott topology.
\end{Prop}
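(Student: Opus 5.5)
The plan is to reduce everything to point-set topology via the identification $\operatorname{Sub}_{\mathscr A}(1)\cong\Cal O(X)$ for $\mathscr A=\Sh(X)$. First, $\Sh(X)$ is a Grothendieck topos, hence a cocomplete elementary topos. By the Remark following Setting~\ref{setting} it therefore satisfies the Setting for $\lambda=\aleph_0$. Proposition~\ref{categorical-algebraic-core} then gives
$$
\Fin(0,1)=\Alg_{\varnothing}(\Cal O(X)),
$$
that is, the set of open subsets of $X$ which are unions of compact elements of the frame $\Cal O(X)$. It also tells us that the categorical Zariski topology on $\Fin(0,1)$ is its Scott topology.

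The compact elements of $\Cal O(X)$ are exactly the quasi-compact open subsets; this is immediate, since directed covers can be refined from arbitrary covers by finite unions. Since $X$ is Hausdorff, a quasi-compact open set is closed, so it is clopen. Conversely, a clopen set is quasi-compact because $X$ is. Hence
$$
\Fin(0,1)=\{A\subseteq X\mid A \text{ is a union of clopen subsets of } X\}.
$$

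Next I would show that these are precisely the sets $q^{-1}(W)$ with $W$ open in $\pi_0(X)$.

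If $A$ is a union of clopens, then $A$ is saturated for $q$: a clopen set meeting a connected component must contain it. Since $q^{-1}(q(A))=A$ is open, $q(A)$ is open in $\pi_0(X)$.

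Conversely, let $W\subseteq\pi_0(X)$ be open. I need $q^{-1}(W)$ to be a union of clopens. This is the step I expect to be the main obstacle, and I would handle it with two classical facts about compact Hausdorff spaces.

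1. Connected components coincide with quasi-components, i.e. with intersections of the clopen sets containing a point.
2. Consequently, given a component $C\subseteq q^{-1}(W)$, compactness lets me find a single clopen $V$ with $C\subseteq V\subseteq q^{-1}(W)$. Indeed, $C$ is the intersection of clopens and the complement of $q^{-1}(W)$ is closed, hence compact, so finitely many of these clopens already miss it; their intersection is the required $V$.

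Thus $q^{-1}(W)$ is a union of clopens. Equivalently, $\pi_0(X)$ is a Stone space whose clopens correspond to the clopens of $X$.

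Combining the two directions, $A\mapsto q(A)$ and $W\mapsto q^{-1}(W)$ are mutually inverse and inclusion-preserving. So $Q$ is an order isomorphism, and therefore an isomorphism of complete lattices. Finally, the Scott topology is defined purely in terms of the order (directed suprema), so any order isomorphism is a homeomorphism between the Scott topologies. Since the categorical Zariski topology on $\Fin(0,1)$ equals its Scott topology by Proposition~\ref{categorical-algebraic-core}, $Q$ is a homeomorphism as claimed.
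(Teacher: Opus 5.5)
Your proof is correct and follows essentially the same route as the paper: identify $\sub{1}$ with $\mathcal O(X)$, recognise the compact elements as the clopen sets so that $\Fin(0,1)$ consists of unions of clopens, use saturation to see that $q(A)$ is open, and obtain the homeomorphism from Proposition~\ref{categorical-algebraic-core} together with the purely order-theoretic definition of the Scott topology. The one minor difference is that you prove directly that $q^{-1}(W)$ is a union of clopens, using the coincidence of components and quasi-components in compact Hausdorff spaces, whereas the paper gets this by citing that $\pi_0(X)$ is profinite and pulling back its clopen basis.
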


\begin{proof}
We identify $\sub 1$ with $\Cal O(X)$. Since $X$ is a compact space, an open subset is quasi-compact if and only if it is clopen. Hence the compact elements of $\Cal O(X)$ are precisely the clopen subsets of $X$, and
$$
\Fin(0,1)
=
\left\{\bigcup_{i\in I}C_i\ \middle|\ C_i\in\Clop(X)\right\},
$$
where $\Clop(X)$ denotes the family of clopen subsets of $X$. Since $\pi_0(X)$ is a profinite space (see \cite[§5.7, Proposition 5.7.12]{borceux-janelidze}), that is, it is a totally disconnected compact space, it admits a basis of clopen subsets. It follows that the assignment $U \mapsto q^{-1}(U)$ defines a map
$$
Q^*:\Cal O(\pi_0(X))\longrightarrow\Cal \Fin(0,1).
$$
Moreover, $q$ is a quotient map, so $Q^*$ is injective and preserves arbitrary unions and finite intersections; hence it is a homomorphism of lattices. Conversely, every clopen subset of $X$ is a union of connected components, and therefore every $A \in \Fin(0,1)$ is saturated with respect to $q$, that is, $q^{-1}(q(A))=A$. In particular, $q(A) \in \Cal O(\pi_0(X))$ for every $A \in \Fin(0,1)$. It follows that $Q$ and $Q^*$ are mutually inverse isomorphisms of complete lattices. The last assertion is clear.
\end{proof}

\begin{remark}
The construction retains the topology of the component space and forgets the geometry internal to the connected components. In a compact space, the compact elements of $\Cal O(X)$ are precisely the clopen subsets; therefore, the lattice $\Cal O(X)$ is algebraic if and only if the clopen subsets of $X$ form a basis. Thus Proposition~\ref{components-example} may identify a proper algebraic core of $\sub 1$.
\end{remark}

\begin{remark}
A relative version is obtained by fixing a saturated open subset $B=q^{-1}(B_0)$, with $B_0 \in \Cal O (\pi_0(X))$. In this case,
$$
\Fin(B,1)
=
\{q^{-1}(U)\mid B_0\subseteq U\subseteq\pi_0(X),\ U\text{ open}\},
$$
and hence $\Fin(B,1)$ is naturally isomorphic to the interval $[B_0,\pi_0(X)]$ in $\Cal O(\pi_0(X))$.
\end{remark}

\bibliographystyle{plain}
\bibliography{articoli-carmelo-federico}

@book {adamek-rosicky,
    AUTHOR = {Ad\'{a}mek, Ji\v{r}\'{\i} and Rosick\'{y}, Ji\v{r}\'{\i}},
     TITLE = {Locally presentable and accessible categories},
    SERIES = {London Mathematical Society Lecture Note Series},
    VOLUME = {189},
 PUBLISHER = {Cambridge University Press, Cambridge},
      YEAR = {1994},
     PAGES = {xiv+316},
      ISBN = {0-521-42261-2},
   MRCLASS = {18Axx (18-02)},
  MRNUMBER = {1294136},
MRREVIEWER = {J.\ R.\ Isbell},
       DOI = {10.1017/CBO9780511600579},
       URL = {https://doi.org/10.1017/CBO9780511600579},
}

@book{BorceuxHandbook3,
  author    = {Borceux, Francis},
  title     = {Handbook of Categorical Algebra 3: Categories of Sheaves},
  series    = {Encyclopedia of Mathematics and its Applications},
  volume    = {52},
  publisher = {Cambridge University Press},
  address   = {Cambridge},
  year      = {1994},
  isbn      = {9780521441803}
}

@book{borceux-janelidze,
  author    = {Borceux, Francis and Janelidze, George},
  title     = {Galois Theories},
  series    = {Cambridge Studies in Advanced Mathematics},
  volume    = {72},
  publisher = {Cambridge University Press},
  address   = {Cambridge},
  year      = {2001},
  isbn      = {978-0-521-80309-0}
}

@article {Banerjee,
    AUTHOR = {Banerjee A.},
     TITLE = {Closure operators in abelian categories and spectral spaces},
   JOURNAL = {Theory Appl. Categ.},
  FJOURNAL = {Theory and Applications of Categories},
    VOLUME = {32},
      YEAR = {2017},
    NUMBER = {20},
     PAGES = {719–735},
}

@article {finocchiaro-ultrafiltri,
    AUTHOR = {Finocchiaro, Carmelo A.},
     TITLE = {Spectral spaces and ultrafilters},
   JOURNAL = {Comm. Algebra},
  FJOURNAL = {Communications in Algebra},
    VOLUME = {42},
      YEAR = {2014},
    NUMBER = {4},
     PAGES = {1496--1508},
      ISSN = {0092-7872},
       DOI = {10.1080/00927872.2012.741875},
       URL = {http://dx.doi.org/10.1080/00927872.2012.741875},
}

@article {fifolo_transactions,
    AUTHOR = {Finocchiaro, Carmelo A. and Fontana, Marco and Loper, K. Alan},
     TITLE = {The constructible topology on spaces of valuation domains},
   JOURNAL = {Trans. Amer. Math. Soc.},
  FJOURNAL = {Transactions of the American Mathematical Society},
    VOLUME = {365},
      YEAR = {2013},
    NUMBER = {12},
     PAGES = {6199--6216},
      ISSN = {0002-9947},
       DOI = {10.1090/S0002-9947-2013-05741-8},
       URL = {http://dx.doi.org/10.1090/S0002-9947-2013-05741-8},
}

@article{top-vers-null,
	title = {A topological version of {H}ilbert's {N}ullstellensatz},
	journal = {J. {A}lgebra},
	volume = {461},
	pages = {25-41},
	year = {2016},
	issn = {0021-8693},
	doi = {https://doi.org/10.1016/j.jalgebra.2016.04.020},
	url = {https://www.sciencedirect.com/science/article/pii/S0021869316300862},
	author = {Carmelo A. Finocchiaro and Marco Fontana and Dario Spirito},
}

@incollection {fi-fo-sp-dist,
	AUTHOR = {Finocchiaro, Carmelo A. and Fontana, Marco and Spirito, Dario},
	TITLE = {New distinguished classes of spectral spaces: a survey},
	BOOKTITLE = {Multiplicative ideal theory and factorization theory},
	SERIES = {Springer Proc. Math. Stat.},
	VOLUME = {170},
	PAGES = {117--143},
	PUBLISHER = {Springer, [Cham]},
	YEAR = {2016},
	DOI = {10.1007/978-3-319-38855-7\_5},
	URL = {https://doi.org/10.1007/978-3-319-38855-7_5},
}

@book{Gierz2003,
  author    = {Gierz, Gerhard and Hofmann, Karl Heinrich and Keimel, Klaus and Lawson, Jimmie D. and Mislove, Michael and Scott, Dana S.},
  title     = {Continuous Lattices and Domains},
  series    = {Encyclopedia of Mathematics and its Applications},
  volume    = {93},
  publisher = {Cambridge University Press},
  address   = {Cambridge},
  year      = {2003},
  isbn      = {978-0-521-80338-0}
}

@article {hochster_spectral,
    AUTHOR = {Hochster, Melvin},
     TITLE = {Prime ideal structure in commutative rings},
   JOURNAL = {Trans. Amer. Math. Soc.},
  FJOURNAL = {Transactions of the American Mathematical Society},
    VOLUME = {142},
      YEAR = {1969},
     PAGES = {43--60},
      ISSN = {0002-9947},
}

@article {olberding_irredundant,
    AUTHOR = {Olberding, Bruce},
     TITLE = {Irredundant intersections of valuation overrings of
              two-dimensional {N}oetherian domains},
   JOURNAL = {J. Algebra},
  FJOURNAL = {Journal of Algebra},
    VOLUME = {318},
      YEAR = {2007},
    NUMBER = {2},
     PAGES = {834--855},
      ISSN = {0021-8693},
       DOI = {10.1016/j.jalgebra.2007.07.030},
       URL = {http://dx.doi.org/10.1016/j.jalgebra.2007.07.030},
}

@book {Popescu-abelian,
    AUTHOR = {Popescu, N.},
     TITLE = {Abelian categories with applications to rings and modules},
    SERIES = {London Mathematical Society Monographs, No. 3},
 PUBLISHER = {Academic Press, London-New York},
      YEAR = {1973},
     PAGES = {xii+467},
   MRCLASS = {18EXX (16A62)},
  MRNUMBER = {340375},
MRREVIEWER = {B. Stenstr\"{o}m},
}

@article{Porst2011,
  author  = {Porst, Hans-E.},
  title   = {Algebraic lattices and locally finitely presentable categories},
  journal = {Algebra Universalis},
  volume  = {65},
  number  = {3},
  pages   = {285--298},
  year    = {2011},
  doi     = {10.1007/s00012-011-0129-0}
}

@book {Sten,
	AUTHOR = {Stenstr\"om B.},
	TITLE = {Rings of Quotients},
	PUBLISHER = {Grundlehren der Mathematischen Wissenschaften, Springer-Verlag, New York-Heidelberg},
	YEAR = {1975},
	PAGES = {},
	MRCLASS = {},
	MRNUMBER = {},
}
\end{document}